\tikzstyle{vertex}=[circle,draw=black,fill=black,inner sep=0,minimum size=3pt,text=white,font=\footnotesize]
\date{}
\title{\vspace{-0.8cm} String graphs have the Erd\H{o}s-Hajnal property}
\author{
	Istv\'{a}n Tomon\thanks{ETH Zurich. \emph{e-mail}: \textbf{istvan.tomon@math.ethz.ch}, Research supported by SNSF grant 200021-149111.}
}
\theoremstyle{plain}
\newtheorem{theorem}{Theorem}
\newtheorem{lemma}[theorem]{Lemma}
\theoremstyle{definition}
\begin{document}
\maketitle

\sloppy

\begin{abstract}
	A \emph{string graph} is the intersection graph of curves in the plane. We prove that there exists an absolute constant $c>0$ such that if $G$ is a string graph on $n$ vertices, then $G$ contains either a clique or an independent set of size at least $n^{c}$.
\end{abstract}

\section{Introduction}
Let $\alpha(G)$ and $\omega(G)$ denote the independence number and clique number of a graph $G$, respectively. A family of graphs is \emph{hereditary} if it is closed under taking induced subgraphs. Say that a family of graphs $\mathcal{G}$ has the \emph{Erd\H{o}s-Hajnal property}, if there exists a constant $c=c(\mathcal{G})>0$ such that each $G\in \mathcal{G}$ contains either a clique or an independent set of size at least $|V(G)|^{c}$. The celebrated conjecture of Erd\H{o}s and Hajnal \cite{EH89} states that if $\mathcal{G}$ is a hereditary family of graphs that is not the family of all graphs, then $\mathcal{G}$ has the Erd\H{o}s-Hajnal property (originally, this conjecture is stated such that for every graph $H$, the family of graphs not containing $H$ as an induced subgraph has the Erd\H{o}s-Hajnal property, but this is equivalent to the previous formulation). This conjecture is mostly wide open, and it has been verified for only certain special families $\mathcal{G}$, see the survey of Chudnovsky \cite{Ch} for a general reference.

Erd\H{o}s-Hajnal type questions are also extensively studied in geometric settings. The \emph{intersection graph} of a family of geometric objects $\mathcal{C}$ is the graph whose vertices correspond to the elements of $\mathcal{C}$, and two vertices are joined by an edge if the corresponding sets have a nonempty intersection. Perhaps one of the first geometric Erd\H{o}s-Hajnal type results is the classical folklore that if $G$ is the intersection graph of a family of $n$ intervals, then $G$ is perfect, so $G$ contains either a clique or an independent set of size at least $n^{1/2}$. It was proved by Larman et al. \cite{LMPT94} that if $G$ is the intersection graph of axis-parallel boxes in the plane, then $\max\{\alpha(G),\omega(G)\}=\Omega(\sqrt{\frac{n}{\log n}}).$ They also proved that if $G$ is the intersection graph of convex sets, then either $G$ or its complement contains a clique of size at least $n^{1/5}$. While intersection graphs of intervals, rectangles, disks, convex sets, etc. were the subjects of study in the early and mid 20th century, in the past few decades the focus shifted to more general geometric graphs. One such generalization is semi-algebraic graphs, that is, graphs whose vertices correspond to points in $\mathbb{R}^{d}$, and the edges are defined by polynomial relations (for precise definitions, see \cite{APPRS}). Indeed, intersection graphs of intervals, disks, and rectangles are special instances of semi-algebraic graphs. However, intersection graphs of convex sets are not. In general, it was proved by Alon et al. \cite{APPRS} that if $\mathcal{G}$ is a family of semi-algebraic graphs of bounded complexity, then $\mathcal{G}$ has the Erd\H{o}s-Hajnal property.

Another generalization, which is the main interest of this paper, is string graphs. A \emph{curve} (or a \emph{string}) is the image of a continuous function $\phi: [0,1]\rightarrow \mathbb{R}^2$. A \emph{string graph} is the intersection graph of a family of curves.  String graphs were introduced by Benzer \cite{B59} in 1959 to study topological properties of genetic structures, and later Sinden \cite{S66} considered such graphs to model printed circuits. Since, combinatorial and computational properties of string graphs are extensively studied. Note that, in certain sense, curves are the most general geometric objects on the plane one can consider: indeed, when talking about geometric objects, one of the weakest geometric properties one should require is arcwise connectedness, and any arcwise connected set on the plane can be approximated arbitrarily closely by curves. In particular, all of the aforementioned intersection graphs are string graphs as well.

 The question whether the family of string graphs have the Erd\H{o}s-Hajnal property is one of the central problems in the area, and was settled in a number of interesting special cases. See Alon et al. \cite{APPRS} for one of the first appearances of this problem, and \cite{FP08_survey} for a survey type paper on the topic.  Larman et al. \cite{LMPT94} proved that if $G$ is the intersection graph of $n$ \emph{$x$-monotone curves} (a curve is $x$-monotone if every vertical line intersects it in at most 1 point), then $\max\{\alpha(G),\omega(G)\}\geq n^{1/5}$. Also, it was proved by Fox, Pach and T\'oth \cite{FPT11} that for every $k$ there exists $c_k>0$ such that if $G$ is the intersection graph of $n$ curves, and any two curves intersect in at most $k$ points, then $G$ contains either a clique or an independent set of size at least $n^{c_k}$. In general, Fox and Pach \cite{FP14_sep2} proved the slightly weaker result that if $G$ is a string graph, then either $G$ or its complement contains a clique of size $n^{\Omega(1/\log \log n)}$. The main result of our paper is that the family of string graphs has the Erd\H{o}s-Hajnal property, which implies the Erd\H{o}s-Hajnal property of all of the aforementioned families of intersection graphs.

\begin{theorem}\label{thm:main}
	There exists an absolute constant $c>0$ such that for every positive integer $n$, if $G$ is a string graph with $n$ vertices, then $G$ contains either a clique or an independent set of size at least $n^{c}$.
\end{theorem}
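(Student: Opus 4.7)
\medskip

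\noindent\textbf{Proof proposal.}
The plan is strong induction on $n$, with a dichotomy driven by the edge density of $G$. I will fix a constant $c>0$ (the target exponent) and a small threshold $\eta>0$ at the very end; the inductive claim is that every string graph on $n$ vertices contains a clique or independent set of size at least $n^{c}$.

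The two structural ingredients I intend to use are: (i) a balanced vertex separator theorem for string graphs in the style of Fox--Pach, namely that any string graph with $m$ edges admits a separator $S$ of size $m^{1/2}\polylog(n)$ whose removal leaves components of size at most $\tfrac{2}{3}n$; and (ii) a biclique lemma for dense string graphs, asserting that a string graph with $n^{2-\eta}$ edges contains a (not necessarily induced) complete bipartite subgraph $K_{t,t}$ with $t=n^{\Omega(1)}$, where the hidden constants depend on $\eta$. Ingredient (ii) is where the topology of curves must be exploited, and it is where I expect to have to do real work; (i) is essentially off the shelf.

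\emph{Case 1 (sparse): $|E(G)|\le n^{2-\eta}$.} Apply (i) to obtain a separator $S$ of size $O(n^{1-\eta/2}\polylog n)$. The graph $G-S$ splits as a disjoint union $G_1\sqcup G_2$ of string graphs with $|V(G_i)|\le \tfrac{2}{3}n$. The induction hypothesis furnishes a clique or independent set of size $(\tfrac{n}{3})^{c}$ inside one of the $G_i$; this beats $n^{c}$ as long as $c$ is sufficiently small relative to $\eta$, and the size of $S$ is negligible for the same reason.

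\emph{Case 2 (dense): $|E(G)|>n^{2-\eta}$.} Apply (ii) to find disjoint sets $A,B\subseteq V(G)$ with $|A|=|B|=t=n^{\delta}$ and a complete bipartite subgraph between them. Now I would like to leverage the biclique topologically: every curve in $A$ meets every curve in $B$. Fix a curve $\gamma_0\in A$; each $\beta\in B$ enters $\gamma_0$ at some first point, giving a linear order on $B$. Restricting to this interval structure along $\gamma_0$, I hope to pass to a large subfamily of $B$ whose behaviour relative to $\gamma_0$ is ``interval-like'' (so an Erd\H{o}s--Szekeres / interval-graph argument finds a monochromatic clique or coclique of size $t^{1/2}=n^{\delta/2}$), or else derive a geometric obstruction forcing many pairwise crossings, i.e.\ a clique. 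Either conclusion gives a clique or independent set of size $n^{\Omega(\delta)}$, beating $n^{c}$ when $c$ is chosen small enough.

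\emph{Main obstacle.} The sparse/separator half is routine and in the Fox--Pach style; the real difficulty is Case 2. The weaker bound $n^{\Omega(1/\log\log n)}$ of Fox--Pach arises precisely because, at each recursive step, turning a dense string graph into a substantially smaller instance loses a multiplicative factor in the exponent. To improve this to an absolute constant $c$, one cannot afford to recurse on bicliques naively: each invocation of the biclique lemma must already produce either a clique of polynomial size or a structured subgraph whose Erd\H{o}s--Hajnal exponent is controlled \emph{without} a further logarithmic penalty. Engineering such a sharp biclique-or-clique dichotomy for string graphs, using the planarity of the underlying curve arrangement, is the crux of the argument and the place where I expect a genuinely new structural lemma is needed.
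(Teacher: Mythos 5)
Your Case~1 (sparse) is sound and matches the paper's first ingredient in spirit: the paper invokes Lee's separator theorem for string graphs (Lemma~\ref{lemma:separator}), in the clean form that a string graph with at most $\lambda n^2$ edges has two disjoint linear-sized subsets with no edges between them. The real issue is Case~2, where you have a gap that you yourself flag: you propose to find a $K_{t,t}$ biclique and then argue ``interval-like'' behaviour of $B$ along a fixed curve $\gamma_0\in A$, but this step is not proved and the heuristic does not obviously go through. Two curves of $B$ with nested first-hitting points on $\gamma_0$ need not intersect, and curves far apart along $\gamma_0$ can still cross elsewhere in the plane, so the order along $\gamma_0$ does not control the intersection pattern within $B$ without substantial further work. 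Moreover, even granting a clean biclique or interval structure, you are still facing the exact wall you identify: Fox's bipartite Dilworth theorem gives $t=\Theta(n/\log n)$ for dense incomparability graphs and this is tight, so recursing on one biclique per step yields only the known $n^{\Omega(1/\log\log n)}$ bound. Your proposal names the obstacle but does not overcome it.

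The paper's resolution is different and is worth contrasting. In the dense case, it first passes from a dense string graph to a dense \emph{incomparability} graph on the same vertex set (Fox--Pach, Lemma~\ref{lemma:incomp}); it never works directly with curves in the dense regime. The genuinely new structural lemma you anticipated is Theorem~\ref{thm:incomparability}: in any dense incomparability graph on $n$ vertices one can find not merely one complete bipartite pair, but some $t\geq 2$ disjoint sets $X_1,\dots,X_t$, pairwise complete, with the \emph{scale-free} guarantee $(n/|X_i|)^c<t$ --- the sets are allowed to be small provided there are correspondingly many of them. This flexibility is what defeats the $\log\log$ loss. The second new idea is a recursion framework tailored to this output, the ``quasi-Erd\H{o}s--Hajnal property'' (Section~\ref{sect:EH}): one repeatedly refines a partition, tracking a potential $\sum_{Y\in\mathcal{X}}|Y|^c$ which never decreases because $\sum_i|X_i|^c\geq|X|^c$, and organizes the pieces into a cograph $H$; since cographs are perfect, $H$ (and hence $G$) has a clique or independent set of size $n^{c/2}$. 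Your biclique-plus-recursion scheme, even if the topological step were filled in, produces only a single pair $(A,B)$ at each stage and hence cannot exploit this amortization; the key missing idea is precisely the move from ``two large complete sides'' to ``many sides of flexible size, measured against $t$,'' together with the potential-function recursion that makes that output usable.
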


Our proof of Theorem \ref{thm:main} closely follows the path laid out by the works of Fox and Pach \cite{FP10_sep1,FP12_inc,FP14_sep2}. In the next subsection, we discuss their ideas and outline our proof strategy. We also introduce our notation, which is mostly conventional.

\section{Overview of the proof}

Given a graph $G$ and two subsets $A$ and $B$ of $V(G)$, say that \emph{$A$ is complete to $B$} if every $a\in A$ and $b\in B$ is joined by an edge.

A graph $G$ is a \emph{comparability graph} if there exists a partial ordering $\prec$ on $V(G)$ such that for any $v,w\in V(G)$, we have $v\prec w$ or $w\prec v$ if and only if $vw$ is an edge of $G$. Also, $G$ is an \emph{incomparability graph}, if it is the complement of a comparability graph.

\bigskip

\noindent
\textbf{Previous approach:}  It turns out that string graphs and incomparability graphs are closely related. Indeed, it was proved by Lov\'asz \cite{Lo} and reproved in \cite{GRU83,PT06} that every incomparability graph is a string graph. On the other hand, Fox and Pach \cite{FP12_inc} proved that every dense string graph contains a dense incomparability graph on the same vertex set as a subgraph. This result is going to be one of the main ingredients of our proof of Theorem \ref{thm:main}, see Section \ref{sect:strings} for more details.

 A family of graphs $\mathcal{G}$ has the \emph{strong-Erd\H{o}s-Hajnal property}, if there exists a constant $b=b(\mathcal{G})>0$ such that the vertex set of every $G\in \mathcal{G}$ contains two disjoint subsets $A$ and $B$ such that $|A|=|B|\geq b|V(G)|$, and either there are no edges between $A$ and $B$, or $A$ is complete to $B$. It is not hard to show by a recursive argument that in a hereditary graph family, the strong-Erd\H{o}s-Hajnal property implies the Erd\H{o}s-Hajnal property, see \cite{APPRS}. One approach to proving Theorem \ref{thm:main} would be to show that the family of string graphs has the strong-Erd\H{o}s-Hajnal property. Indeed, with the exception of intersection graphs of $x$-monotone curves, every family of intersection graphs where the Erd\H{o}s-Hajnal property is known also has the strong-Erd\H{o}s-Hajnal property.

Unfortunately, the family of string graphs does not have the strong-Erd\H{o}s-Hajnal property, but it has something close to it. Let $G$ be a string graph on $n$ vertices. A separator theorem of Lee \cite{L17} shows that if $G$ is sufficiently sparse (meaning that $|E(G)|\leq \lambda n^2$ for some small constant $\lambda$), then $V(G)$ contains two linear sized subsets with no edges between them. This result is going to be another important ingredient in our proof, see Section \ref{sect:strings} for more details. On the other hand, by a result of Fox \cite{F06}, every dense incomparability graph on $n$ vertices contains two disjoint sets $A$ and $B$ of size $\Omega(\frac{n}{\log n})$ such that $A$ is complete to $B$, and this bound is the best possible up to the constant factor. But then remembering that every dense string graph contains a dense incomparability graph, we get that if $G$ is dense, then $G$ contains  two disjoint sets $A$ and $B$ of size $\Omega(\frac{n}{\log n})$ such that $A$ is complete to $B$. Therefore, one can conclude the following ``almost-strong-Erd\H{o}s-Hajnal property'': 

\begin{theorem}(\cite{L17,FP12_inc})
If $G$ is a string graph on $n$ vertices, then $V(G)$ contains two disjoint sets $A$ and $B$ such that either 
\begin{enumerate}
	\item $|A|=|B|=\Omega(n)$ and there are no edges between $A$ and $B$, or
	\item $|A|=|B|=\Omega(\frac{n}{\log n})$ and $A$ is complete to $B$.
\end{enumerate}
\end{theorem}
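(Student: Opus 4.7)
The proof splits on edge density. Fix a small constant $\lambda>0$ small enough to activate Lee's separator theorem as quoted in the overview, and distinguish two cases according to whether $|E(G)|\le \lambda n^2$ or $|E(G)|> \lambda n^2$. In each regime, one of the two structural tools identified in the overview delivers the desired pair $(A,B)$ directly, so the theorem is essentially an assembly of these two ingredients.

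In the sparse case, Lee's separator theorem applies and yields, for $\lambda$ sufficiently small, two disjoint subsets of $V(G)$ of size $\Omega(n)$ with no edges between them. This is exactly conclusion (1); no further work is needed. In the dense case, I would invoke the Fox--Pach theorem that every string graph $G$ with $|E(G)|\ge \lambda n^2$ contains, as a subgraph on the same vertex set, an incomparability graph $H$ with $|E(H)|\ge c_\lambda n^2$ for some $c_\lambda>0$. Because $H$ is an incomparability graph of positive edge density, Fox's theorem produces disjoint $A,B\subseteq V(H)=V(G)$ with $|A|=|B|=\Omega(n/\log n)$ such that $A$ is complete to $B$ in $H$. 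Since $H$ is a subgraph of $G$, the same sets witness conclusion (2) in $G$.

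The only real obstacle is that nothing in the argument is original: it is a one-line corollary of the two cited results, and the task is simply to verify that the density threshold $\lambda$ needed for Lee's theorem is compatible with the density threshold required by the Fox--Pach theorem. Since both results tolerate arbitrary positive density thresholds (with the constants in the conclusions depending on the threshold), any choice of $\lambda$ works and no optimization is necessary. In particular, the hidden constants in $\Omega(n)$ and $\Omega(n/\log n)$ depend only on the absolute constants coming from the two black-box theorems.
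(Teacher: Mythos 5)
Your proposal is correct and follows exactly the same route the paper sketches in its overview: a density threshold $\lambda$, with Lee's separator theorem handling the sparse regime to produce linear-sized sets with no edges between them, and the Fox--Pach result (dense string graph $\Rightarrow$ dense incomparability subgraph on the same vertex set) combined with Fox's bipartite Dilworth theorem handling the dense regime to produce $\Omega(n/\log n)$-sized sets that are complete to each other. The paper itself presents this theorem as an assembly of these cited ingredients rather than giving an independent proof, so nothing is missing.
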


Again, these bounds are the best possible up to the constant factor. Then, by a recursive argument this theorem implies that if $G$ is a string graph on $n$ vertices, then $G$ contains a clique or an independent set of size $n^{\Omega(1/\log\log n)}$, see \cite{FP14_sep2}.

\bigskip

\noindent
\textbf{New ideas:} In order to improve this bound, we do the following. Instead of proving the strong-Erd\H{o}s-Hajnal property, we prove something slightly weaker which we call the \emph{quasi-Erd\H{o}s-Hajnal property}. Roughly, a family of graphs $\mathcal{G}$ has this property if for every $G\in\mathcal{G}$ there exist some $t\geq 2$ and $t$ disjoint subsets $X_1,\dots,X_t$ of $V(G)$ such that $|X_1|,\dots,|X_t|$ are ``large'' with respect to $t$ and $|V(G)|$, and either there are no edges between $X_i$ and $X_j$ for $1\leq i<j\leq t$, or $X_i$ is complete to $X_j$ for $1\leq i<j\leq t$. It turns out  that in hereditary families, this quasi-Erd\H{o}s-Hajnal property is equivalent to the Erd\H{o}s-Hajnal property, see Section \ref{sect:EH} for more details. Then, our main contribution to the proof of Theorem \ref{thm:main} is that in every dense incomparability graph $G$, there exist $t\geq 2$ and $t$ disjoint subsets $X_1,\dots,X_t$ such that $|X_1|,\dots,|X_t|$ are ``large'' with respect to $t$ and $|V(G)|$, and $X_i$ is complete to $X_j$ for $1\leq i<j\leq t$. This can be found in Section \ref{sect:poset}. But then, together with the aforementioned results of Lee \cite{L17} and Fox and Pach \cite{FP12_inc}, this implies that the family of string graphs has the quasi-Erd\H{o}s-Hajnal property.

\bigskip

 \noindent
 \textbf{Notation:} In the rest of our paper, we use the following standard graph theoretic notations. If $G$ is a graph, $\Delta(G)$ denotes the maximum degree of $G$, and if $v\in V(G)$, then $N(v)=\{w\in V(G):vw\in E(G)\}$ is the neighborhood of $v$. If $U$ is a subset of the vertex set, then $G[U]$ is the subgraph of $G$ induced on $U$. Given a poset $P$ with partial ordering $\prec$, a total ordering $<_l$ is a \emph{linear extension} of $\prec$ if $x\prec y$ implies $x<_l y$ for every $x,y\in P$. It is well known that every partial ordering has a linear extension (which might not be unique). Also, if $A,B\subset P$, then we write $A<_l B$ if $a<_l b$ for every $a\in A, b\in B$. We omit floors and ceilings whenever they are not crucial.

\section{Incomparability graphs}\label{sect:poset}

Our main contribution to the proof of Theorem \ref{thm:main} is the following result about partial orders, which might be of independent interest.

\begin{theorem}\label{thm:incomparability}
	For every $\alpha>0$ there exists $c>0$ such that the following holds. Let $G$ be an incomparability graph with $n$ vertices and at least $\alpha\binom{n}{2}$ edges. Then there exist $t\geq 2$ and $t$ disjoint subsets $X_1,\dots,X_t$ of $V(G)$ such that $X_i$ is complete to $X_j$ for $1\leq i<j\leq t$, and $(\frac{n}{|X_i|})^{c}<t$ for $i=1,\dots,t$. 
\end{theorem}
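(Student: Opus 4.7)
The plan is a dichotomy on the underlying poset $P$ (whose incomparability graph is $G$) via Dilworth's theorem, combined with a structural analysis of the bipartite incomparabilities between chains. The hope is that between two chains of a poset, the bipartite incomparability graph has a ``staircase'' form that admits large bicliques without the logarithmic loss of the Fox--Pach theorem. Fix a small constant $c_0 = c_0(\alpha) > 0$ to be tuned later.

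First I would split into two cases. If $P$ has an antichain $A$ of size $\geq n^{c_0}$, take $t = |A|$ and $X_i = \{a_i\}$ for each $a_i \in A$; since antichain elements are pairwise incomparable in $P$, hence pairwise adjacent in $G$, the singletons are pairwise complete, and $(n/|X_i|)^c = n^c < n^{c_0} \leq t$ holds for any $c < c_0$. Otherwise, Dilworth's theorem gives a chain decomposition $P = C_1 \cup \cdots \cup C_r$ with $r < n^{c_0}$. In this case each chain is an independent set in $G$, so all $\alpha \binom{n}{2}$ edges lie between distinct chains, and averaging over the $\binom{r}{2}$ chain pairs yields a constant fraction of pairs with inter-chain incomparability density at least some $\delta = \delta(\alpha, c_0) > 0$.

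The key structural observation is a staircase property: for two chains $C = (u_1 \prec u_2 \prec \cdots)$ and $C' = (v_1 \prec v_2 \prec \cdots)$, the set $I(a) \subseteq C'$ of indices $b$ such that $u_a$ is incomparable with $v_b$ is an interval, and both of its endpoints are non-decreasing in $a$. A staircase bipartite graph of density $\delta$ therefore contains a biclique $(Y, Y')$ with $|Y|, |Y'| = \Omega(\delta \cdot \min(|C|, |C'|))$, obtained by taking short contiguous sub-chains around a typical index. To assemble $t$ pairwise complete sets rather than merely two, I would build an auxiliary graph on $[r]$ whose edges mark dense chain pairs, find a clique $\{i_1, \ldots, i_t\}$ of polynomial-in-$r$ size by standard averaging, and then greedily select contiguous $X_k \subseteq C_{i_k}$ so that all cross-pairs are incomparable. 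The staircase monotonicity is essential here: after committing to $X_1, \ldots, X_{k-1}$, the subset of $C_{i_k}$ incomparable with every element of $X_1 \cup \cdots \cup X_{k-1}$ is an intersection of intervals, hence an interval, so the greedy step is tractable.

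The main obstacle is controlling the cumulative size loss during the $t$ iterations of the greedy assembly: naively, each of the $t - 1$ interval-intersection constraints could shrink the available sizes by a constant factor, leading to exponential loss. The staircase monotonicity saves the argument, since each constraint only shifts the interval endpoints by a controlled amount and the total loss ends up polynomial in $t$. With $t = n^{\Omega(c_0)}$ and each $|X_i| = n^{1 - O(c_0)}$, tuning $c_0 = c_0(\alpha)$ sufficiently small yields a positive constant $c = c(\alpha) > 0$ satisfying $(n/|X_i|)^c < t$.
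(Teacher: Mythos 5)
Your opening dichotomy is sound: the large-antichain case correctly produces $n^{c_0}$ pairwise-complete singletons, and the staircase observation---that the incomparability pattern between two chains $C,C'$ is a staircase bipartite graph, with both endpoints of $I(a)$ non-decreasing in $a$---is a correct consequence of transitivity. The argument breaks down, however, at the two steps you flag as hopeful but leave unproved. First, the claim that ``standard averaging'' extracts a clique of polynomial-in-$r$ size from the auxiliary graph on chain indices is false: a dense graph on $r$ vertices need not contain a clique larger than $O(\log r)$ (random graphs of density $1/2$ attain this), and with $r\approx n^{c_0}$ this yields only $t=O(\log n)$, which cannot dominate $(n/|X_i|)^c$ when $|X_i|=n^{1-\Theta(c_0)}$. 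Without identifying and exploiting hereditary structure in the auxiliary graph, this step cannot produce polynomial $t$. Second, even granted $t$ pairwise-dense chains, the greedy assembly must keep the surviving interval in $C_{i_k}$ from shrinking by more than a $\mathrm{poly}(t)$ factor after imposing $k-1$ cross-incomparability constraints; you concede this is ``the main obstacle,'' but ``the total loss ends up polynomial in $t$'' is precisely the content that needs proof. A priori each constraint can shrink the interval by a constant fraction, and staircase monotonicity alone does not stop chains from crisscrossing in a way that makes successive constraints nearly independent. (There is also a smaller issue: averaging over the $\binom{r}{2}$ chain pairs to get many dense pairs is not valid when chain sizes are very unequal; a binning step is needed.)

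It is worth noting that the paper's proof avoids both difficulties by taking a different route entirely. It never passes through a Dilworth chain decomposition; instead it splits the poset into two halves $A<_l B$ along a linear extension of bounded cross-degree, runs a dyadic degree-scale iteration on $B$, and at the crucial step samples a random subset $S\subseteq A$ at density $2^{-k}$. Vertices of $B$ with a unique neighbour in $S$ are grouped by that neighbour, and transitivity shows distinct groups carry no edges between them; the groups are then merged into $t$ blocks of controlled size. The losses are governed by the dyadic bookkeeping rather than by nested interval intersections, which is what lets the paper bypass the exponential-loss issue your greedy assembly faces.
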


We would like to emphasize that $t$ depends on the incomparability graph $G$. In order to prove this theorem, it is slightly better to work with comparability graphs instead of incomparability graphs, so we prove the following equivalent statement instead.

\begin{theorem}\label{thm:poset}
	For every $\alpha>0$ there exists $c>0$ such that the following holds. Let $P$ be a comparability graph with $n$ vertices and at most $(1-\alpha)\binom{n}{2}$ edges. Then there exist $t\geq 2$ and $t$ disjoint subsets $X_1,\dots,X_t$ of $V(P)$ such that there are no edges between $X_i$ and $X_j$ for $1\leq i<j\leq t$, and $(\frac{n}{|X_i|})^{c}<t$ for $i=1,\dots,t$. 
\end{theorem}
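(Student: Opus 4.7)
The plan is to prove Theorem~\ref{thm:poset} by induction on $n$, with the constant $c = c(\alpha) > 0$ chosen sufficiently small. Let $\prec$ be a partial order on $V(P)$ whose comparability graph is $P$, and fix a linear extension $<_l$ of $\prec$. Let $w$ be the maximum size of an antichain in $(V(P), \prec)$. The argument is a dichotomy on $w$.

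In the \emph{large antichain case}, suppose $w \geq n^{1-\delta}$ for a suitably small $\delta = \delta(\alpha,c) > 0$. Take a maximum antichain $A$ and partition it into $t$ equal parts $X_1,\dots,X_t$, where $t$ is the smallest integer exceeding $(n/|A|)^{c/(1-c)}$. Since $A$ is an antichain, the $X_i$ are pairwise incomparable (so $P$ has no edges between different $X_i$'s), and a direct computation gives $(n/|X_i|)^c = (nt/|A|)^c < t$ by the choice of $t$. Taking $c < \delta$ ensures $t \leq |A|$, so the partition is feasible; $t \geq 2$ holds (with the degenerate case $|A| = n$ handled by taking $t = 2$).

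In the \emph{no large antichain case}, $w < n^{1-\delta}$, and by Dilworth the poset decomposes into $w$ chains $C_1,\dots,C_w$, whose average size is at least $n^{\delta}$. The key structural fact is that for any two chains $C_i, C_j$, the comparability relation between them has a monotone staircase shape in the product grid $[|C_i|] \times [|C_j|]$: writing $C_i = (c_i^1 \prec c_i^2 \prec \dots)$, the set $\{(a,b) : c_i^a \prec c_j^b\}$ is downward-closed in $a$ and upward-closed in $b$ (and analogously for $\{(a,b) : c_j^b \prec c_i^a\}$), so the complement forms an \emph{incomparable band}. The plan is to select sub-intervals $S_i \subseteq C_i$ together with an assignment $\sigma \colon [w] \to [t]$ of chains to parts, so that whenever $\sigma(i) \neq \sigma(j)$ the product $S_i \times S_j$ lies in the incomparable band of $(C_i, C_j)$; then set $X_r = \bigcup_{\sigma(i) = r} S_i$. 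Each chain must lie entirely within a single part since within-chain pairs are all comparable, so the only cross-part edges to avoid come from the staircases.

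The main obstacle is this no-large-antichain case: arranging $\sigma$ together with the sub-intervals $S_i$ to satisfy the staircase constraints for every cross-part chain pair, while keeping each $|X_r|$ large enough to meet $(n/|X_r|)^c < t$, is a delicate multi-dimensional selection problem. The density assumption implies that on average chain-pairs have large incomparable bands, and a careful averaging, greedy, or probabilistic argument --- possibly combined with an inductive call on a smaller, denser auxiliary poset (for instance, a ``meta-poset'' on the chains, or a subposet obtained after a suitable quotient) --- should produce a valid configuration with polynomial-size parts. A naive iteration of the Fox--Pach bipartite lemma within the subsets it produces only yields parts of size $n^{o(1)}$, so exploiting the chain/staircase structure seems essential for genuinely polynomial sizes.
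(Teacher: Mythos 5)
Your Case 1 (large maximum antichain) is fine, modulo the parameter bookkeeping you gesture at. But Case 2 (no large antichain) is the heart of the matter, and there you have only a plan, not a proof. You correctly note the ``staircase'' structure of the incomparability pattern between two chains in the Dilworth decomposition, and you correctly identify that the problem becomes ``select sub-intervals $S_i$ of the chains and an assignment of chains to parts so that every cross-part pair $S_i\times S_j$ lies inside the incomparable band.'' You then write that ``a careful averaging, greedy, or probabilistic argument \dots should produce a valid configuration.'' That sentence is the entire content of the hard case, and it is not an argument. In particular nothing is said about how to make the incomparable bands simultaneously compatible across all $\binom{w}{2}$ chain pairs (the band between $C_i$ and $C_j$ and the band between $C_i$ and $C_k$ constrain $S_i$ in possibly contradictory ways), nor about how the density hypothesis $|E(P)|\le(1-\alpha)\binom{n}{2}$ is to be converted into a lower bound on $|X_r|$ that beats $n/t^{1/c}$. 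The bands can be thin diagonal strips with no $2\times 2$ sub-box, so one cannot just take a product of intervals inside a single band; some global counting over all chain pairs is essential, and it is exactly this counting that is missing.

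The paper avoids the chain/antichain dichotomy entirely. After a preprocessing step (Lemma~\ref{lemma:sparse}) that uses a linear extension $<_l$ to produce a bottom set $A$ and a top set $B$ with small cross-degrees (or else directly yields two linear-sized non-adjacent sets), the main lemma (Lemma~\ref{lemma:main}) runs an iterative bucketing of $B$ by degree into $A$, removes high-degree ``heavy'' vertices, and in the crucial Case~1 of the sub-algorithm picks a random subset $S\subseteq A$ with density $2^{-k}$ and groups the vertices of $B$ by their unique neighbor in $S$; transitivity (applied once, to the triple $v\prec w\prec w'$) then shows the groups $Y_v$ are pairwise non-adjacent. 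This produces the $X_i$'s directly, with the quantitative bound $(n/|X_i|)^{1/2}\lesssim t$. So the mechanism is a degree-sequence/probabilistic argument along a single linear extension, not a Dilworth chain decomposition. If you want to pursue your route, the gap you must close is precisely the one you flag as ``the main obstacle'': a concrete selection scheme for the $S_i$'s and $\sigma$, with a proof that it yields parts of polynomial size under the density hypothesis. As written, the proposal does not prove the theorem.
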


 Let us prove this theorem. Instead of working with very dense comparability graphs, we would like to work with sparse ones. With the help of a technical lemma, we show that $P$ contains either a sparse comparability graph with linearly many vertices, or we can find $2$ linear sized subsets $X_1$ and $X_2$ with no edges between them. In order to show this, we make use of the following well known result.

\begin{lemma}\label{lemma:maxdeg}
	For every $\alpha>0$ there exists $\alpha_1$ such that the following holds. If $G$ is a graph with $n$ vertices and at most $(1-\alpha)\binom{n}{2}$ edges, then  $G$ contains an induced subgraph $G'$ such that $|V(G')|\geq \alpha_1 n$ and $\Delta(G')\leq (1-\alpha_1)|V(G')|$.
\end{lemma}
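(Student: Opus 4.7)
The plan is to pass to the complement and find a linear-sized induced subgraph with high \emph{minimum} degree via a standard greedy deletion argument. Let $H=\overline{G}$, so $|E(H)| \geq \alpha\binom{n}{2}$. Observe that it suffices to produce an induced subgraph $H'$ of $H$ with $|V(H')|\geq \alpha_1 n$ and $\delta(H')\geq \alpha_1|V(H')|$: then $G'=G[V(H')]$ satisfies
\[
\Delta(G')\;=\;|V(H')|-1-\delta(H')\;\leq\;(1-\alpha_1)|V(H')|-1\;\leq\;(1-\alpha_1)|V(G')|,
\]
as required.

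The main step is the greedy procedure. Set $H_0=H$, $n_0=n$, and as long as there is a vertex $v\in V(H_i)$ with $\deg_{H_i}(v)<\alpha_1 n_i$, remove it to obtain $H_{i+1}$ with $n_{i+1}=n_i-1$. Let $H_k$ be the graph when the process halts. By construction $\delta(H_k)\geq \alpha_1 n_k$, so if $n_k\geq \alpha_1 n$ we are done.

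The remaining task is to rule out the case $n_k<\alpha_1 n$ by a simple edge-count. At step $i$ we delete strictly fewer than $\alpha_1 n_i\leq \alpha_1 n$ edges, and there are at most $n$ steps, so the total number of deleted edges is less than $\alpha_1 n^2$. On the other hand, $|E(H_k)|\leq \binom{n_k}{2}<\tfrac{1}{2}\alpha_1^2 n^2$. Combining,
\[
\alpha\binom{n}{2}\;\leq\;|E(H)|\;=\;(\text{deleted edges})+|E(H_k)|\;<\;\alpha_1 n^2+\tfrac{1}{2}\alpha_1^2 n^2.
\]
For $n\geq 2$ this forces $\alpha/4<\alpha_1+\alpha_1^2/2$, which fails once $\alpha_1$ is a sufficiently small constant multiple of $\alpha$ (e.g.\ $\alpha_1=\alpha/10$), giving the desired contradiction. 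The trivial values of $n$ (where $|V(G')|=n$ already works) are handled by taking $\alpha_1\leq 1$ in addition.

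There is no real obstacle here: the only delicate point is choosing $\alpha_1$ small enough (as a function of $\alpha$) so that the deleted-edge bound plus the final-graph bound is strictly less than $\alpha\binom{n}{2}$. The argument is entirely routine and is a standard way to extract a dense induced subgraph with a degree condition from a graph with many edges.
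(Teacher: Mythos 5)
Your proof is correct and follows the same route as the paper: pass to the complement and extract a linear-sized induced subgraph of linear minimum degree, which the paper treats as a ``well known statement'' without proof. You simply supply the standard greedy deletion argument that the paper implicitly relies on, together with the edge count needed to terminate the recursion, and the arithmetic (e.g.\ $\alpha_1=\alpha/10$) checks out.
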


\begin{proof}
	By taking complements, this immediately follows from the following well known statement: There exists $\alpha_1>0$ such that if $G$ is a graph with $n$ vertices with at least $\alpha\binom{n}{2}$ edges, then  $G$ contains an induced subgraph $G'$ such that $|V(G')|\geq \alpha_1 n$ and the minimum degree of $G'$ is at least $\alpha_1|V(G')|$.
\end{proof}

\begin{lemma}\label{lemma:sparse}
	For every $\alpha,\epsilon>0$ there exists $\beta>0$ such that the following holds. Let $P$ be a comparability graph with $n$ vertices and at most $(1-\alpha)\binom{n}{2}$ edges, and let $<_l$ be a linear extension of the underlying partial order. Then either
	\begin{enumerate}
		\item there exist two disjoint subsets $A$ and $B$ of $V(P)$ such that $A<_l B$, $|A|=|B|\geq\beta n$, $|N(v)\cap B|\leq \epsilon |B|$ for every $v\in A$, and  $|N(w)\cap A|\leq \epsilon |A|$ for every $w\in B$,  or
		\item there exist two disjoint sets $X_1,X_2\subset V(P)$ such that there are no edges between $X_1$ and $X_2$, and $|X_1|,|X_2|\geq \beta n$.
	\end{enumerate} 
\end{lemma}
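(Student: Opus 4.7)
I would first reduce to the case of bounded maximum degree by applying Lemma~\ref{lemma:maxdeg}: pass to an induced subgraph $P[V_1]$ with $N:=|V_1|\geq \alpha_1 n$ and $\Delta(P[V_1])\leq (1-\alpha_1)N$, so that every $v\in V_1$ has at least $\alpha_1 N$ non-neighbors in $V_1$. I then restrict $<_l$ to $V_1$ and split $V_1$ along this ordering into its left half $L$ and right half $R$, each of size $N/2$, so that in particular $L<_l R$.

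The main argument is a dichotomy on the number of $P$-edges between $L$ and $R$. In the sparse case $|E_P(L,R)|\leq (\epsilon/8)|L||R|$, Markov's inequality applied to each side gives at least $|L|/2$ vertices $v\in L$ with $|N(v)\cap R|\leq(\epsilon/4)|R|$, and symmetrically at least $|R|/2$ vertices $w\in R$ with $|N(w)\cap L|\leq(\epsilon/4)|L|$. Taking equal-sized subsets $A\subseteq L$ and $B\subseteq R$ of these ``good'' vertices, each of size $N/4\geq(\alpha_1/4)n$, conclusion~(1) holds with $\beta=\alpha_1/4$, since $(\epsilon/4)|R|=\epsilon|B|/2\leq\epsilon|B|$ and similarly for $A$.

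In the dense case $|E_P(L,R)|>(\epsilon/8)|L||R|$ I must produce $X_1,X_2\subseteq V_1$ of linear size with no $P$-edges between them. The plan is to combine the dense bipartite assumption with the max-degree bound: the ``heavy'' $L$-vertices (those with many $R$-neighbors) are forced by $\Delta(P[V_1])\leq(1-\alpha_1)N$ to have many non-neighbors inside $L$, so $P[L]$ has high non-edge density. I would then try to extract $X_1,X_2$ from $L$ via a common-non-neighborhood or dependent-random-choice argument, exploiting that $<_l$ restricted to $L$ is still a linear extension of the induced partial order. This dense-bipartite case is the expected main obstacle: a lower bound on the non-edge density of $P[L]$ alone is insufficient to produce two linear-size subsets with strictly no cross-edges (in a random-graph analogue one would only find $O(\log N)$-size pairs), so the poset structure of $P[L]$ must be used essentially --- the many upward shadows from $L$ into $R$ constrain the internal comparability structure of $L$ enough to yield the desired linear-size $X_1,X_2$.
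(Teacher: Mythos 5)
Your reduction to bounded maximum degree via Lemma~\ref{lemma:maxdeg} and your sparse-case argument (Markov on both sides of a split of the linear extension) are both correct, and broadly in the same spirit as part of the paper's proof. The dense case, however, is a genuine gap, and the plan you sketch for it points in the wrong direction.

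The key observation you are missing is the following purely order-theoretic fact, which is what the paper uses. Let $v$ lie in the lower part with respect to $<_l$, and let $T$ be an upper interval with a small top slice $U\subset T$. Set $X_1=N(v)\cap(T\setminus U)$ and $X_2=U\setminus N(v)$. Then there is \emph{no} edge between $X_1$ and $X_2$: if $x\in X_1$, $y\in X_2$ were adjacent, then $x<_l y$ would force $x\prec y$ (comparability plus linear extension), while $v<_l x$ and $vx\in E(P)$ would force $v\prec x$; transitivity then gives $v\prec y$, contradicting $y\notin N(v)$. Note that both $X_1$ and $X_2$ live in $T$ (your $R$), not in $L$ as you suggest; the ``upward shadow'' of $v$ and its complement inside the top slice are the relevant sets, not the non-neighborhood of $v$ inside $L$. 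Consequently, the ``common non-neighborhood'' or ``dependent random choice'' plans aimed at the internal structure of $P[L]$ are not what is needed.

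Beyond the missing observation, your direct sparse/dense dichotomy is structurally shaky. In the dense regime $|E_P(L,R)|>(\epsilon/8)|L||R|$ it is not true that conclusion~(2) must hold; it may well be that only conclusion~(1) does. The paper instead argues by contraposition: assume (2) fails. Then for each heavy vertex $v$ the set $X_1$ above already has linear size, so $X_2=U\setminus N(v)$ must be sub-linear, i.e., $v$ is adjacent to almost all of $U$. Counting edges between the heavy set $H$ and $U$, some vertex of $U$ has degree at least $(1-\alpha_1/3)|H|$, and the maximum degree bound then forces $|H|<(1-\alpha_1/3)n'$. This leaves a linear supply of light vertices, from which one extracts $A$ and $B$ by the same averaging you used in your sparse case. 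So your Markov step is the right tool, but it should be applied after the heaviness bound, not as one horn of an a priori edge-count dichotomy. Without the transitivity observation and the contrapositive framing, the dense case cannot be closed along the lines you propose.
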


\begin{proof}
	By Lemma \ref{lemma:maxdeg}, there exists $\alpha_1$ (depending only on $\alpha$) such that $P$ contains an induced subgraph $P'$ with $n'=|V(P')|\geq \alpha_1 n$ and $\Delta(P')\leq (1-\alpha_1)n'$. We show that $\beta=\frac{\alpha_1^2\epsilon}{24}$ suffices.  Suppose that 2. does not hold, that is, there exist no two disjoint sets $X_1$ and $X_2$ such that there are no edges between $X_1$ and $X_2$, and $|X_1|,|X_2|\geq \beta n$. Then, we prove that 1. holds.
	
	 Let $\prec$ be the partial ordering of the underlying poset of $P$. Let $T$ be the $\frac{\alpha_1}{6}n'$ largest elements of $P'$ with respect to the linear extension $<_l$, and let $S=P'\setminus T$. Also, let $U$ be the $\frac{\epsilon}{4}|T|$ largest elements of $T$ with respect to $<_l$. Say that a vertex $v\in S$ is \emph{heavy} if $|N(v)\cap T|\geq \frac{\epsilon}{2}|T|$. Setting $X_1=N(v)\cap (T\setminus U)$ and $X_2=U\setminus N(v)$, there are no edges between $X_1$ and $X_2$. Indeed, otherwise, if $x\in X_1$ and $y\in X_2$ are joined by an edge, then $x<_l y$ implies $x\prec y$, and as $v\prec x$, we get that $v\prec y$, contradicting $y\not\in N(v)$. But if $v$ is heavy, then $|X_1|\geq \frac{\epsilon}{2} |T|-|U|=\frac{\epsilon}{4}|T|=\beta n$, so we must have $|X_2|<\beta n= \frac{\alpha_1}{3}|U|$. This implies $|N(v)\cap U|\geq (1-\frac{\alpha_1}{3}) |U|$. If $H$ is the set of heavy vertices, then the number of edges between $H$ and $U$ is at least $(1-\frac{\alpha_1}{3})|U||H|$, which implies that there exists a vertex in $U$ of degree at least $(1-\frac{\alpha_1}{3})|H|$. Therefore, by the maximum degree condition we can write $(1-\frac{\alpha_1}{3})|H|\leq(1-\alpha_1)n'$, which gives $|H|\leq \frac{1-\alpha_1}{1-\alpha_1/3}n'<(1-\frac{\alpha_1}{3})n'$. But then $|S\setminus H|=n'-|T|-|H|>\frac{\alpha_1}{6}n'$, so there are at least $\frac{\alpha_1}{6}n'$ vertices $v\in S$ such that $|N(v)\cap T|\leq \frac{\epsilon}{2} |T|$. Let $A$ be a set of $\frac{\alpha_1}{12}n'\geq \beta n$ such vertices. The number of edges between $A$ and $T$ is at most $\frac{\epsilon}{2}|A||T|$, so the number of vertices $v\in T$ such that $|N(v)\cap T|\geq \epsilon |A|$ is at most $|T|/2\leq \frac{\alpha_1}{12}n'$. Delete all such vertices from $T$, and perhaps some more, to get a set $B$ of size $\frac{\alpha_1}{12}n'$. Then $A$ and $B$ satisfy 1.
\end{proof}

Most of the work needed to prove Theorem \ref{thm:poset} is put into the following lemma.

\begin{lemma}\label{lemma:main}
	There exist positive real numbers $\epsilon$ and $\delta$ such that the following holds. Let $P$ be a comparability graph on $2n$ vertices, and let $<_l$ be a linear extension of the underlying poset. Let $A$ be the smallest $n$ elements of $P$ with respect to $<_l$, let $B=P\setminus A$, and suppose that $|N(v)\cap B|\leq \epsilon n$ for every $v\in A$ and $|N(w)\cap A|\leq \epsilon n$ for every $w\in B$. Then there exist $t\geq 2$ and $t$ disjoint sets $X_1,\dots,X_t\subset V(P)$ such that there are no edges between $X_i$ and $X_j$ for $1\leq i<j\leq t$, and $\delta(\frac{n}{|X_i|})^{1/2}<t$ for $i=1,\dots,t$.  
\end{lemma}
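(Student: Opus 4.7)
The plan is to split into two cases based on the width of the poset, with $\delta'>0$ a small constant tuned against $\delta$ (for instance $\delta'=3\delta$).

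In the first case, $P[A]$ (or symmetrically $P[B]$) contains an antichain $S$ of size $s\geq \delta'\sqrt{n}$. I would choose an integer $t\geq 2$ so that $st\asymp \delta^2 n$, concretely $t=\max\{2,\lceil 2\delta^2 n/s\rceil\}$, and partition $S$ into $t$ pieces $X_1,\ldots,X_t$ of size $\lfloor s/t\rfloor$. Since $\bigcup_i X_i\subseteq S$ is an antichain, no two elements from distinct $X_i$'s are comparable in $P$, so there are no edges of the comparability graph between different $X_i$'s. A direct calculation using $s\geq \delta'\sqrt{n}$ verifies $|X_i|>\delta^2 n/t^2$, equivalent to $\delta(n/|X_i|)^{1/2}<t$, for $\delta$ small enough relative to $\delta'$.

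In the second case, both $P[A]$ and $P[B]$ have width less than $\delta'\sqrt{n}$. By Dilworth's theorem, $A$ and $B$ each decompose into fewer than $\delta'\sqrt{n}$ chains; let $\mathcal{C}=\mathcal{C}_A\cup \mathcal{C}_B$ be the combined family, of size $|\mathcal{C}|<2\delta'\sqrt{n}$. A key structural observation is that each chain behaves like a single element with respect to the bipartite structure: if $C=(c_1\prec c_2\prec\cdots)$ is a chain in $A$, then by transitivity $N(c_i)\cap B\subseteq N(c_1)\cap B$ for all $i$, so $|N(C)\cap B|\leq \epsilon n$ (and symmetrically for chains in $B$). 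Since each chain is a clique in the comparability graph, any valid family $X_1,\ldots,X_t$ must intersect each chain in at most one class, so the $X_i$'s correspond to groups of chains. Form the auxiliary graph $\Gamma$ on $\mathcal{C}$ where $C\sim C'$ iff $C$ and $C'$ contain a comparable pair in $P$. Then chains in distinct $X_i$'s must be non-adjacent in $\Gamma$; equivalently, each $X_i$ is contained in a single connected component of $\Gamma$.

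The main obstacle is the final step in Case 2: producing $t\geq 2$ groups of chains whose total chain-lengths are each at least $\delta^2 n/t$. The bipartite sparseness restricts $\Gamma$'s $A$-to-$B$ connectivity (each $A$-chain has only $\epsilon n$ bipartite neighbors in total), and $|\mathcal{C}|=O(\sqrt{n})$, which together should enable ``fragmenting'' $\Gamma$---either by exploiting already-existing disconnected components, or by trimming a few elements off selected chains to break unwanted adjacencies in $\Gamma$, paid for by the bipartite budget. The square-root trade-off $t\sim \sqrt{n/|X_i|}$ in the conclusion corresponds naturally to the regime of $\Theta(\sqrt{n})$ chains of average length $\Theta(\sqrt{n})$, where one may group them into anywhere from $2$ to $\Theta(\sqrt{n})$ mutually $\Gamma$-non-adjacent pieces; making this fragmentation argument quantitative while balancing the trimming loss against the number of groups is the technical heart of the proof.
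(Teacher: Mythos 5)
Your Case 1 (large width) is a clean and correct observation: if $P[A]$ or $P[B]$ contains an antichain $S$ of size $\geq \delta'\sqrt{n}$, then splitting $S$ into $t\approx 2\delta^2 n/|S|$ pieces works out, since an antichain is an independent set in the comparability graph. This is an elegant route to the ``large width'' half of the problem, and it is genuinely different from the paper's argument (the paper never appeals to Dilworth or to width).

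However, Case 2 contains a genuine gap that you yourself flag. Reducing to a chain cover $\mathcal{C}$ with $|\mathcal{C}|<2\delta'\sqrt{n}$ is fine, but the auxiliary-graph picture on $\Gamma$ does not faithfully model the problem: the $X_i$'s need not be unions of \emph{whole} chains, so insisting that each $X_i$ corresponds to a $\Gamma$-independent group of full chains is too restrictive. (In particular, the sentence ``equivalently, each $X_i$ is contained in a single connected component of $\Gamma$'' does not follow from the preceding correct observation, and on its own says nothing useful if $\Gamma$ is connected.) The genuine difficulty is exactly the one you acknowledge: $\Gamma$ can be a dense bipartite graph between the $A$-chains and $B$-chains even under the hypotheses---each $A$-chain has at most $\epsilon n$ neighbors in $B$, but that neighborhood may meet every $B$-chain---so ``fragmenting $\Gamma$'' must be done by trimming partial chains, and you give no mechanism to control how much is trimmed, how the trimmed pieces are reassembled, or why the resulting group sizes satisfy $\delta(n/|X_i|)^{1/2}<t$. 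Since this is the entire content of the lemma in the low-width regime, the proposal is incomplete as a proof.

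For comparison, the paper's proof makes no width dichotomy. It runs an iterative algorithm that bins $B$-vertices into dyadic degree classes $V_i$ according to $|N(v)\cap A|$, maintains ``leftover'' sets $A',B'$ with a potential argument bounding their size, and then, within a heavy class $V_k$, repeatedly deletes high-degree $A$-vertices. The crucial combinatorial step is a probabilistic selection: sample $S\subseteq A$ with probability $2^{-k}$ per vertex, consider the $B$-vertices with exactly one neighbor in $S$, and use the poset transitivity to show that two such vertices with \emph{different} unique $S$-neighbors cannot be comparable. This produces the desired $t$ groups directly, without ever invoking chain decompositions. Your antichain observation might be salvageable as one branch of an alternate proof, but the chain-cover branch would need a substantially more careful quantitative argument than the sketch you give.
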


\begin{proof}
	We prove that we can choose $\epsilon=\frac{1}{500}$ and $\delta=\frac{1}{100}$. Let $\prec$ be the partial ordering of the underlying poset of $P$.

	 Let $J=J_{0}=\lfloor \log_2 \epsilon n\rfloor+1$. For $j=1,\dots,J$, let $t_j=n^{1/2}2^{j/2}$. Then \begin{equation}\label{equ1}
	\sum_{i=1}^{J_0}t_i=\sum_{i=1}^{J_0} n^{1/2} 2^{i/2}\leq 2n\epsilon^{1/2}\frac{1}{1-2^{-1/2}}<\frac{n}{4}.
	 \end{equation}
	 Also, let $A'=\emptyset$ and $B'=\emptyset$. In what comes, we define an algorithm, which we shall refer to as the \emph{main algorithm}, which will find and output the desired $t$ and the $t$ sets $X_1,\dots,X_t$. During each step of the algorithm, we will  make the following changes: we will move certain elements of $A$ into $A'$, move certain elements of $B$ into $B'$, and decrease $J$. We think of the elements of $A'$ and $B'$ as ``leftovers''. We will keep track that at the end of each step of the algorithm, the following properties are satisfied:
	 \begin{enumerate} 
	 	\item	$|A|+|A'|=|B|+|B'|=n$,
	 	\item  $\displaystyle|A'|,|B'|\leq 2\sum_{i=J+1}^{J_{0}} t_i,$
	 	\item  for every $v\in B$, $|N(v)\cap A|<2^{J}$.
	 \end{enumerate}
	Note that by (\ref{equ1}) and property 1. and 2., we have $|A|,|B|\geq \frac{n}{2}$. Also, these properties are certainly satisfied in the beginning of the algorithm. Now let us describe a general step of our main algorithm.
	
    \bigskip	
	\noindent
	\textbf{Main algorithm:}
	
	For $i=1,\dots,J$, let $V_i$ be the set of vertices $v\in B$ such that $2^{i-1}\leq |N(v)\cap A|<2^{i}$, and let $V_{0}$ be the set of vertices $v\in B$ such that $N(v)\cap A=\emptyset$. Then by property 3., $B=\bigcup_{i=0}^{J} V_{i}$. 
	
	Let $1\leq k\leq J$ be maximal such that  $t_k<|V_k|$. First, consider the case if there exists no such $k$. Then $$n-\sum_{i=J+1}^{J_0}t_{i}-|V_0|\leq n-|B'|-|V_0|=|B|-|V_0|=\sum_{i=1}^{J} |V_i|\leq \sum_{i=1}^{J}t_i,$$
	where the first inequality follows from property 2., and the first equality is the consequence of property 1.. Comparing the left and right hand side, and using (\ref{equ1}), we get $|V_{0}|\geq n/2$. In this case, stop the algorithm and output  $t=2$, $X_1=V_{0}$ and $X_2=A$. Note that $\delta(\frac{n}{|X_i|})^{1/2}<t$ is satisfied for $i=1,2$.
	
	 Now suppose that there exists such a $k$.  Remove the elements of $V_i$ for $i>k$ from $B$, and add them to $B'$. Also, set $J:=k$. Then we added at most $ \sum_{i=k+1}^{J} t_i$ elements to $B'$, and properties 1.-3. are still satisfied.
	 
	 Now we shall run a \emph{sub-algorithm}. Let $W_0=V_k$, then with help of the sub-algorithm we construct a sequence $W_0\supset\dots\supset W_r$ satisfying the following properties. During each step of the sub-algorithm, we either find our desired $t$ and $t$ sets $X_1,\dots,X_t$, or we will move certain elements of $A$ to $A'$. At the end of the $l$-th step of this algorithm, $W_l$ be the set of vertices in $B$ that still has at least $2^{k-1}$ neighbors in $A$. We stop the algorithm if $W_l$ is too small.
	 
	 \bigskip
	 
	 \noindent
	 \textbf{Sub-algorithm:}
	 
	   Suppose that $W_{l}$ is already defined. If $W_{l}<2t_k$, then let $r=l$, stop the sub-algorithm, remove the elements of $W_l$ from $B$ and add them to $B'$. Make the update $J:=k-1$, and move to the next step of the main algorithm. Note that $B'$ satisfies property 2. Later, we will see that all the other properties are satisfied.
	 
	   On the other hand if $|W_l|\geq 2t_k$, we define $W_{l+1}$ as follows. Let $x_l=\frac{|W_l|}{t_k}$. Say that a vertex $v\in A$ is \emph{heavy} if 
	  $$|N(v)\cap W_l|\geq \frac{x_l 2^k}{t_k}|W_l|=\left(\frac{|W_l|}{t_k}\right)^{2}2^k=\frac{|W_l|^{2}}{n}=:\Delta_l,$$ and let $H_l$ be the set of heavy vertices. Counting the number of edges $f$ between $H_l$ and $W_l$ in two ways, we can write 
	  $$|H_l|\Delta_l\leq f< |W_l|2^{k},$$
	  which gives $|H_l|<\frac{t_k}{x_l}$. Remove the elements of $H_l$ from $A$ and add them to $A'$. Examine how the degrees of the vertices in $W_l$ changed, and consider the following two cases:
	
	\begin{description}
		\item[Case 1.] At least $\frac{|W_l|}{2}$ vertices in $W_l$ have at least $2^{k-1}$ neighbors in $A$. 
		
		Let $T$ be the set of vertices in $W_l$ that have at least $2^{k-1}$ neighbors in $A$, so $|T|\geq \frac{|W_l|}{2}$. Pick each element of $A$ with probability $p=2^{-k}$, and let $S$ be the set of selected vertices. Say that $v\in T$ is \emph{good}, if $|N(v)\cap S|=1$, and let $Y$ be the set of good vertices. Then
		$$\mathbb{P}(v\mbox{ is good})=|N(v)\cap A|p(1-p)^{|N(v)\cap A|-1}\geq \frac{1}{2}(1-2^{-k})^{2^{k}}\geq \frac{1}{6},$$ so $\mathbb{E}(|Y|)\geq \frac{|T|}{6}\geq \frac{|W_l|}{12}$. Therefore, there exists a choice for $S$ such that $|Y|\geq \frac{|W_l|}{12}$, let us fix such an $S$. For each $v\in S$, let $Y_v$ be the set of elements $w\in Y$ such that $N(w)\cap S=\{v\}$.  The important observation is that if $v,v'\in S$ and $v\neq v'$, then there is no edge between $Y_v$ and $Y_{v'}$.  Indeed, otherwise, if $w\in Y_v$ and $w'\in Y_{v'}$ such that $w\prec w'$, then $v\prec w\prec w'$, which means that $\{v,v'\}\in N(w')\cap S$, contradicting that $w'$ is good. Also, note that $$|Y_v|\leq |N(v)\cap W_l|\leq \min\{\epsilon n,\Delta_l\}=:\Delta_l'.$$ In other words, the sets $Y_v$ for $v\in S$ partition $Y$ into sets of size at most $\Delta_l'$. Here, we have 
		$$\frac{|Y|}{\Delta_l'}\geq \frac{|W_l|}{12\Delta_l'}\geq \max\left\{\frac{n}{12|W_{l}|},\frac{|W_l|}{\epsilon n}\right\}.$$
		By the choice of $\epsilon$, the right hand side is always at least $6$. But then we can partition $S$ into $t\geq \frac{|Y|}{3\Delta_l'}\geq 2$ parts $S_1,\dots,S_t$ such that the sets $X_i=\bigcup_{v\in S_i} Y_{v}$ have size at least $\Delta_l'$ for $i=1,\dots,t$. The resulting sets $X_1,\dots,X_t$ satisfy that there are no edges between $X_i$ and $X_j$ for $1\leq i<j\leq t$ and  
		$$t\geq  \frac{|Y|}{3\Delta_l'}\geq  \frac{n}{36|W_{l}|}\geq \frac{1}{36}\left(\frac{n}{\Delta_l}\right)^{1/2}\geq \frac{1}{36}\left(\frac{n}{|X_i|}\right)^{1/2}.$$
		Stop the main algorithm, and output $t$ and $X_1,\dots,X_t$. By the choice of $\delta$, this output satisfies our desired properties.
		
		\item[Case 2.] At most $\frac{|W_l|}{2}$ vertices in $W_l$ have at least $2^{k-1}$ neighbours in $A$.  
		
		In this case, define $W_{l+1}$ as the set of elements of $W_l$ with at least $2^{k-1}$ neighbors in $A$ (then $W_{l+1}$ is the set of all elements in $B$ with at least $2^{k-1}$ neighbors in $A$ as well). Also, move to the next step of the sub-algorithm.
	\end{description}
	
	Let us as check that if the main algorithm is not terminated, then in the end of the sub-algorithm, properties 1.-3. are still satisfied. 1. and 3. are clearly true, and 2. holds for $B'$. It remains to show that 2. holds for $A'$ as well. Note that as $|W_{l+1}|\leq \frac{|W_{l}|}{2}$ for $l=0,\dots,r-1$, and $|W_{r-1}|\geq 2t_k$, we have $|W_l|\geq 2^{r-l}t_k$ and $x_l\geq 2^{r-l}$. Compared to the first step of the sub-algorithm, $|A'|$ increased by 
	$$\sum_{l=0}^{r-1}|H_l|\leq \sum_{l=0}^{r-1}\frac{t_k}{x_l}\leq \sum_{l=0}^{r-1} \frac{t_k}{ 2^{r-l}}<t_{k}.$$
	Therefore, property 2. also holds.
	
	If the main algorithm was not stopped before and $J=0$, then stop the main algorithm, and output $t=2$ and $X_1=A, X_2=B$. Note that in this case there is no edge between $A$ and $B$, and $|A|,|B|\geq \frac{n}{2}$. By the choice of $\delta$, this output also satisfies our desired properties.
\end{proof}

Now we are ready to prove the main theorem of this section.

\begin{proof}[Proof of Theorem \ref{thm:poset}]
	Let $\epsilon,\delta>0$ be the constants given by Lemma \ref{lemma:main}. By Lemma \ref{lemma:sparse}, there exists $\beta>0$ such that the following holds. Let $<_l$ be a linear extension of the underlying partial order of $P$. Then either
	\begin{enumerate}
		\item there exists two disjoint subsets $A$ and $B$ of $P$ such that $A<_l B$, $|A|=|B|\geq\beta n$, $|N(v)\cap B|\leq \epsilon |B|$ for every $v\in A$, and  $|N(w)\cap A|\leq \epsilon |A|$ for every $w\in B$,  or
		\item there exist two disjoint sets $X_1$ and $X_2$ such that there are no edges between $X_1$ and $X_2$, and $|X_1|,|X_2|\geq \beta n$.
	\end{enumerate} 

   If 1. holds, then applying  Lemma \ref{lemma:main} to the comparability graph $P'=P[A\cup B]$, we get that there exist $t\geq 2$ and $t$ disjoint subsets $X_1,\dots,X_t$ of $P'$ such that there are no edges between $X_i$ and $X_j$ for $1\leq i<j\leq t$, and $\delta(\frac{|A|}{|X_i|})^{1/2}<t$ for $i=1,\dots,t$. Here, $\delta(\frac{|A|}{|X_i|})^{1/2}\geq \delta\beta^{1/2}(\frac{n}{|X_i|})^{1/2}.$ If $c$ is chosen sufficiently small with respect to $\beta$ and $\delta$, then $\delta\beta^{1/2}(\frac{n}{|X_i|})^{1/2}\geq (\frac{n}{|X_i|})^c$ holds whenever the left hand side is at least $2$, so this $c$ satisfies the desired properties.
   
   Now suppose that 2. holds. If $c$ is sufficiently small with respect to $\beta$, then the inequalities $t=2\geq (\frac{1}{\beta})^c\geq (\frac{n}{|X_i|})^c$ are satisfied for $i=1,2$. This finishes the proof.
\end{proof}

\section{The quasi-Erd\H{o}s-Hajnal property}\label{sect:EH}

Say that a family of graphs $\mathcal{G}$ has the \emph{quasi-Erd\H{o}s-Hajnal property}, if there exists a constant $c=c(\mathcal{G})$ such that the following holds for every $G\in \mathcal{G}$ with at least 2 vertices: there exist $t\geq 2$ and $t$ disjoint subsets $X_1,\dots,X_t$ of $V(G)$ such that $t\geq (\frac{|V(G)|}{|X_i|})^c$ for $i=1,\dots,t$, and either
\begin{enumerate}
	\item  $X_i$ is complete to $X_j$ for $1\leq i<j\leq t$, or
	\item there is no edge between  $X_i$ and $X_j$ for $1\leq i<j\leq t$.
\end{enumerate}

We show that the Erd\H{o}s-Hajnal property is actually equivalent to the quasi-Erd\H{o}s-Hajnal property in hereditary graph families. 

\begin{lemma}\label{lemma:qEH}
	If $\mathcal{G}$ is a hereditary family of graphs, then $\mathcal{G}$ has the Erd\H{o}s-Hajnal property if and only if it has the quasi-Erd\H{o}s-Hajnal property.
\end{lemma}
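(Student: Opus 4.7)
The plan is to handle the two directions separately; the real content is in the reverse direction.

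For the forward direction (EH $\Rightarrow$ qEH), I would argue directly. If EH holds with exponent $c_0>0$, then for every $G\in\mathcal{G}$ with $n\geq 2$ vertices there is a clique or independent set $S$ of size at least $\lceil n^{c_0}\rceil$. Picking $X_1,\dots,X_t$ to be $t:=\max(2,\lceil n^{c_0}\rceil)$ singletons from $S$ verifies qEH with the same exponent $c_0$; the trivial fact that any two vertices form either an edge or a non-edge handles the tiny values of $n$ for which $n^{c_0}<2$.

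For the reverse direction (qEH $\Rightarrow$ EH), my plan is to induct on $n=|V(G)|$ with the potential $h(G):=\omega(G)\,\alpha(G)$ and prove $h(G)\geq n^{c}$, where $c$ is the qEH constant. This immediately yields EH with exponent $c/2$ via $\max(\omega(G),\alpha(G))\geq\sqrt{h(G)}\geq n^{c/2}$. The base case $n=1$ is trivial ($h(G)=1$), and heredity of $\mathcal{G}$ keeps the induction legitimate. For the inductive step, apply qEH to $G$ to obtain $t\geq 2$ and disjoint $X_1,\dots,X_t\subseteq V(G)$ with $|X_i|\geq n\,t^{-1/c}$. In the ``complete'' alternative, cliques from distinct $G[X_i]$'s concatenate into a clique of $G$, so $\omega(G)\geq\sum_i\omega(G[X_i])$, while trivially $\alpha(G)\geq\max_i\alpha(G[X_i])$. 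Combining these via the elementary inequality $\bigl(\sum_i a_i\bigr)\bigl(\max_j b_j\bigr)\geq\sum_i a_ib_i$ gives the key super-additivity
$$h(G)\;\geq\;\sum_i h(G[X_i])\;\geq\;\sum_i|X_i|^c\;\geq\;t\cdot\frac{n^c}{t}\;=\;n^c,$$
using the inductive hypothesis and $|X_i|^c\geq n^c/t$. The ``no-edge'' alternative is the same statement with the roles of $\omega$ and $\alpha$ swapped, yielding the same conclusion.

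The main obstacle I anticipate is identifying this product potential. A naive induction on $\max(\omega,\alpha)$ by itself fails in the ``mixed'' subcase where some $X_i$ furnish cliques and others furnish independent sets: one would only obtain $\max(\omega(G),\alpha(G))\geq|X_i|^{c'}$, which is at best of order $n^{c'}/t^{c'/c}$ and therefore strictly worse than $n^{c'}$ by a $t$-dependent factor. Using $h=\omega\cdot\alpha$ sidesteps this precisely because the additive direction (whichever of $\omega,\alpha$ it happens to be) is always multiplied by a factor bounded individually by the complementary invariant; so the summation inequality automatically upgrades to super-additivity of $h$, and the loss $|X_i|^c\geq n^c/t$ coming from the $t$-fold splitting is exactly compensated by summing over $t$ blocks.
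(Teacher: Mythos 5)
Your proof is correct. The forward direction is essentially the same as the paper's (singletons from a guaranteed clique or independent set). For the reverse direction the paper and you arrive at the same conclusion via the same numerical lever --- the superadditivity $\sum_{i}|X_i|^c \geq |X|^c$ that follows from $|X_i|\geq |X|\,t^{-1/c}$ --- but package it differently. The paper iteratively refines a partition $\mathcal{X}$ of a subset of $V(G)$, builds an auxiliary graph $H$ on $\mathcal{X}$ by substitution, tracks the invariant $\sum_{Y\in\mathcal{X}}|Y|^c\geq n^c$, observes that $H$ is a cograph that ends up isomorphic to an induced subgraph of $G$ on $\geq n^c$ vertices, and then invokes perfectness of cographs to extract a clique or independent set of size $n^{c/2}$. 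You instead induct directly on $|V(G)|$ with the potential $h(G)=\omega(G)\alpha(G)$, proving $h(G)\geq n^c$ via the one-step superadditivity $h(G)\geq\sum_i h(G[X_i])$, and then take a square root at the very end. Your argument effectively inlines the proof that cographs are perfect (the standard proof of which is exactly the $\omega\cdot\alpha$ induction you use), so it is more self-contained and avoids the explicit cograph construction, at the cost of not making visible the structural object the paper builds. Both are equally tight, losing only a factor of $2$ in the exponent.

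One small remark: in the inductive step you should note (as you implicitly do) that $t\geq 2$ together with disjointness forces each $|X_i|<n$, so the induction hypothesis applies; and that nonemptiness of each $X_i$ is implicit in the statement since $(n/|X_i|)^c$ must be defined.
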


\begin{proof}
	If $\mathcal{G}$ has the Erd\H{o}s-Hajnal property, then there exists $c>0$ such that every $G\in \mathcal{G}$ contains a clique or an independent set of size at least $|V(G)|^c$. But then setting $t=|V(G)|^c$ and defining $X_1,\dots,X_t$ to be the single element sets formed by the vertices of such a clique or independent set shows that $\mathcal{G}$ also has the quasi-Erd\H{o}s-Hajnal property. It remains to show the other direction.
	
	Suppose that $\mathcal{G}$ has the quasi-Erd\H{o}s-Hajnal property. Let $G\in \mathcal{G}$ be a graph on $n$ vertices. Let $\mathcal{X}=\{V(G)\}$ and let $H$ be the graph with vertex set $\mathcal{X}$ (that is, $H$ has exactly one vertex, namely $V(G)$). We repeat the following procedure until every element of $\mathcal{X}$ has only one vertex.  If $\mathcal{X}$ contains a set of size at least $2$, say $X\in \mathcal{X}$, then consider the induced subgraph $G[X]\in\mathcal{G}$. Then there exist $t\geq 2$ and  $t$ disjoint subsets $X_1,\dots,X_t$ of $X$ such that $t\geq (\frac{|X|}{|X_i|})^c$ for $i=1,\dots,t$, and either
	\begin{enumerate}
		\item $X_i$ is complete to $X_j$ for $1\leq i<j\leq t$, or
		\item there is no edge between  $X_i$ and $X_j$ for $1\leq i<j\leq t$.
	\end{enumerate}
	
	Remove the set $X$ from $\mathcal{X}$ and add the sets $X_1,\dots,X_t$. Also, if 1. happens, replace the vertex $X$ in $H$ with a clique on $\{X_1,\dots,X_t\}$, otherwise, replace $X$ in $H$ with an independent set on $\{X_1,\dots,X_t\}$. More precisely, $X_i$ has the same neighborhood as $X$ had outside of $\{X_1,\dots,X_t\}$, and $\{X_1,\dots,X_t\}$ induces either a clique or an independent set depending on whether 1. or 2. holds, respectively.
	
	Note that $\sum_{i=1}^{t}|X_i|^{c}\geq |X|^{c}$, therefore the sum $\sum_{Y\in \mathcal{X}} |Y|^{c}$ did not decrease after the change. Thus, we have $\sum_{Y\in \mathcal{X}} |Y|^{c}\geq n^c$ in each step of the procedure. This implies that at the end of the procedure, that is, when every element of $\mathcal{X}$ is a single vertex set, we have $|\mathcal{X}|\geq n^{c}$.
	
	Moreover, at each step of the procedure, the graph $H$ is a cograph. It is well known that cographs are perfect, therefore, at the end of the procedure, either $H$ or its complement contains a clique of size at least $n^{c/2}$. This clique corresponds to a clique or an independent set of size at least $n^{c/2}$ in $G$. As this is true for every $G\in \mathcal{G}$, $\mathcal{G}$ has the Erd\H{o}s-Hajnal property.
\end{proof}

\section{String graphs}\label{sect:strings}

In this section, we put all the ingredients together to prove Theorem \ref{thm:main}.

A \emph{separator} in a graph $G$ is a subset $S$ of the vertices such that after the removal of $S$, every connected component of $G$ has size at most $\frac{2|V(G)|}{3}$. It was proved by Fox and Pach \cite{FP08_sep0} that if $G$ is the intersection graph of a family of $n$ curves and $g$ is the total number of crossings between the curves, then $G$ contains a separator of size $O(\sqrt{g})$. Later, Fox and Pach  \cite{FP10_sep1} showed that if $G$ is a string graph with $m$ edges, then it contains a separator of size $O(m^{3/4}\sqrt{\log m})$, and proposed the conjecture that one can also find a separator of size $O(\sqrt{m})$, which is then optimal up to the constant factor. In \cite{FP10_sep1,FP14_sep2}, Fox and Pach also provide a number of applications of the existence of small separators. The size of the smallest separator was improved to $O(\sqrt{m}\log m)$ by Matou\v sek \cite{M14}, and recently, Lee \cite{L17} completely settled the aforementioned conjecture of Fox and Pach. The result of Lee immediately implies the following lemma, which will be the first key ingredient in our proof.

\begin{lemma}\label{lemma:separator}
	There exists a constant $\lambda>0$ such that the following holds. If $G$ is a string graph with $n$ vertices and at most $\lambda n^2$ edges, then there exist two disjoint subsets $X_1$ and $X_2$ of the vertices such that there are no edges between $X_1$ and $X_2$, and $|X_1|=|X_2|\geq \lambda n$.
\end{lemma}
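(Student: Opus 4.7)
The plan is to deduce this lemma as a direct corollary of Lee's separator theorem cited in the preceding paragraph: every string graph with $m$ edges has a balanced separator of size $O(\sqrt{m})$. First I would apply this to $G$: since $|E(G)|\leq \lambda n^{2}$, there exists $S\subseteq V(G)$ with $|S|=O(\sqrt{\lambda}\,n)$ such that every connected component of $G\setminus S$ has at most $\tfrac{2n}{3}$ vertices.

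Next, I would partition the components $C_{1},\dots,C_{k}$ of $G\setminus S$ into two bags $Y_{1},Y_{2}$ by a simple greedy rule: process the components in any order, adding each to whichever bag is currently smaller. A one-line induction shows that at every stage $\bigl||Y_{1}|-|Y_{2}|\bigr|$ is bounded by the largest component size, hence at most $\tfrac{2n}{3}$ at the end. Since $|Y_{1}|+|Y_{2}|=n-|S|\geq(1-O(\sqrt{\lambda}))n$, the smaller bag has size at least $\tfrac{n}{6}-O(\sqrt{\lambda})\,n$. Crucially, $Y_{1}$ and $Y_{2}$ are unions of distinct connected components of $G\setminus S$, so no edge of $G$ connects them.

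Finally, I would fix $\lambda>0$ small enough that $\tfrac{n}{6}-O(\sqrt{\lambda})\,n\geq \tfrac{n}{7}$ and also $\lambda\leq \tfrac{1}{7}$. Then I can take $X_{1}\subseteq Y_{1}$ and $X_{2}\subseteq Y_{2}$ to be arbitrary subsets of common size $\lfloor n/7\rfloor\geq \lambda n$; by construction there are no edges between them, completing the proof.

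I do not expect any step here to be a genuine obstacle. The whole lemma is essentially Lee's separator theorem combined with the elementary observation that a balanced separator, via greedy bin-packing of the resulting components, yields two large non-adjacent vertex sets of roughly equal size.
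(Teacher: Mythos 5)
Your proof is correct and takes exactly the approach the paper intends: the paper states that Lemma~\ref{lemma:separator} ``immediately'' follows from Lee's $O(\sqrt{m})$ separator theorem without spelling out the details, and your argument (apply the separator, greedily distribute the components of $G\setminus S$ into two bags, note the difference in bag sizes is at most the largest component size $\leq 2n/3$, then pick $\lambda$ small) is the standard and correct way to fill them in.
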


Let us remark that the author of this paper \cite{T19} proved the following sharpening of this lemma: If the edge density of a string graph is below $1/4$, then one can find two linear sized subsets of the vertices with no edges between them. However, there are string graphs with edge density arbitrarily close to $1/4$ which only contain logarithmic sized such sets.

The final ingredient we need for our proof is the following result of Fox and Pach \cite{FP12_inc}, which tells us that every dense string graph contains a dense incomparability graph.

\begin{lemma}\label{lemma:incomp}
	For every $\lambda>0$ there exist $\epsilon>0$ such that the following holds. If $G$ is a string graph with $n$ vertices and at least $\lambda n^{2}$ edges, then $G$ contains a subgraph $G'$ with  $V(G')=V(G)$ such that $G'$ is an incomparability graph with at least $\epsilon n^2$ edges.
\end{lemma}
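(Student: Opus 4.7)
The plan is to follow the Fox--Pach strategy: convert the given string representation of $G$ into a family of \emph{grounded curves} whose intersection graph is an incomparability subgraph of $G$ preserving a constant fraction of the edges. The relevant classical fact (Lov\'asz, Golumbic--Rotem--Urrutia, Pach--T\'oth) is that if $\widehat C_1,\dots,\widehat C_n$ are curves each lying in the closed upper half-plane and each having exactly one endpoint on the $x$-axis $L$, then their intersection graph is an incomparability graph: one orients each non-edge by $\widehat C_i \prec \widehat C_j$ iff $\widehat C_i$ lies in the bounded face of $\mathbb{R}^2 \setminus (\widehat C_j \cup L)$, and a short Jordan-curve argument shows this is a transitive orientation of the complement.

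So it suffices to produce $n$ grounded curves whose intersection graph is a subgraph of $G$ on the same vertex set and has $\epsilon n^2$ edges. I would fix a string representation $\{C_i\}_{i=1}^n$ of $G$ in general position inside a rectangle whose bottom side is $L$. For each $C_i$, pick a point $p_i \in C_i$, truncate $C_i$ at $p_i$, and attach a downward extension $\sigma_i$ from $p_i$ to $L$; the resulting arc $\widehat C_i$ is grounded. A naive choice either destroys too many existing intersections $C_i \cap C_j$ (if the truncations cut them away) or introduces unwanted ones (if $\sigma_i$ crosses $C_k$ for some $k$ not adjacent to $i$ in $G$).

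To address this, I would assign to each vertex $i$ a bounded-size combinatorial \emph{type} encoding, up to local topology, which piece of $C_i$ is kept after truncation and on which side of $C_i$ the extension $\sigma_i$ exits. This induces a partition of the $\binom{n}{2}$ pairs into a constant number of pair-types, and for each pair-type there is a clean rule predicting whether the crossings $C_i \cap C_j$ survive into $\widehat C_i \cap \widehat C_j$ and whether spurious new crossings are created. A pigeonhole argument then selects a single type per vertex such that the number of preserved genuine edges of $G$ is $\Theta(\lambda)\, n^2$ while the spurious new edges are controllably zero, provided the $\sigma_i$ are routed carefully (for instance, left-to-right along $L$ in an order determined by the types, so that the extensions are nested and only meet their ``own'' curve). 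The grounded family of that type yields the desired incomparability subgraph $G'$.

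The main obstacle is the global coordination of the routings $\sigma_i$: a priori each extension can cross many other curves, and one must argue either that these crossings are absorbed into edges already present in $G$, or that the $\sigma_i$ can be routed to avoid them via judicious use of the bottom portion of the rectangle. Reconciling this planar routing with the type-based pigeonhole, so that the final $G'$ is simultaneously an incomparability graph \emph{and} a subgraph of $G$ (rather than of some auxiliary graph obtained by adding spurious intersections), is the technical heart of the argument.
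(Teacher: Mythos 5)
Lemma~\ref{lemma:incomp} is not proved in this paper; it is imported from Fox and Pach~\cite{FP12_inc} and used as a black box, so the relevant comparison is with their argument, and your sketch has two concrete problems. The topological fact you build on is misremembered: the intersection graph of curves in a closed half-plane, each with exactly one endpoint on the boundary line $L$ and otherwise lying above $L$, is an \emph{outerstring} graph, and outerstring graphs form a strictly larger class than incomparability graphs --- for instance $C_5$ is an outerstring graph (it is even a circle graph), but it is not perfect, hence not a comparability or incomparability graph. Your proposed relation $\widehat C_i \prec \widehat C_j$ is also ill-defined as written, since a simple arc $\widehat C_j$ with one endpoint on $L$ and a free endpoint does not separate the upper half-plane, so $\mathbb{R}^2 \setminus (\widehat C_j \cup L)$ has no bounded face. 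The Golumbic--Rotem--Urrutia equivalence is with graphs of continuous functions on an interval, i.e.\ curves running between two parallel lines with one endpoint on each; singly grounded curves do not suffice, and this distinction is not cosmetic.

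Second, the step you flag as the ``technical heart'' is indeed an unresolved gap, not a bookkeeping issue that a per-vertex ``type'' plus pigeonhole can absorb. Once you attach downward extensions $\sigma_i$ to reach a common baseline, nothing controls how many non-neighbors $C_j$ the arc $\sigma_i$ crosses: in a dense representation the $n$ curves form a web of $\Omega(n^2)$ crossings through which the $\sigma_i$ must thread, and whether $\sigma_i$ can avoid every $C_j$ with $ij \notin E(G)$ is a global planar constraint that a bounded local type cannot encode. Fox and Pach sidestep this entirely by never adding material to the curves: the grounded pieces in their construction are \emph{sub-arcs} of the original strings, grounded on part of the configuration itself, so no spurious intersection can arise and the output is automatically a spanning subgraph of $G$; their effort then goes into the counting argument that a constant fraction of the original crossings survive the truncation. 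As it stands, your proposal would at best produce a dense outerstring spanning subgraph, which is weaker than the lemma and does not plug into Theorem~\ref{thm:incomparability}.
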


By Lemma \ref{lemma:qEH}, in order to prove Theorem \ref{thm:main}, it is enough to show that the family of string graphs has the quasi-Erd\H{o}s-Hajnal property. This almost immediately follows from a combination of the results discussed in this paper.

\begin{theorem}
	The family of string graphs has the quasi-Erd\H{o}s-Hajnal property.
\end{theorem}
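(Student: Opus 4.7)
The plan is to reduce the theorem to two already-proved pillars---Lemma~\ref{lemma:separator} for sparse string graphs and Theorem~\ref{thm:incomparability} for dense incomparability graphs---via a simple case split on the edge density of~$G$. Concretely, I would take the constant $\lambda$ supplied by Lemma~\ref{lemma:separator} as the density threshold, set $n = |V(G)|$, and treat the cases $|E(G)| \leq \lambda n^2$ and $|E(G)| > \lambda n^2$ separately, exhibiting in each case a $t \geq 2$ and sets $X_1,\dots,X_t$ that fulfill the quasi-Erd\H{o}s-Hajnal requirements.

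In the sparse case, Lemma~\ref{lemma:separator} directly furnishes two disjoint subsets $X_1, X_2 \subseteq V(G)$ of size at least $\lambda n$ with no edges between them. Taking $t = 2$, the required inequality $t \geq (n/|X_i|)^c$ reduces to $2 \geq (1/\lambda)^c$, which holds for every $c \leq \log 2 / \log(1/\lambda)$.

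In the dense case, I would first apply Lemma~\ref{lemma:incomp} to obtain a spanning incomparability subgraph $G'$ on $V(G)$ with at least $\epsilon n^2$ edges for some $\epsilon = \epsilon(\lambda)>0$, then apply Theorem~\ref{thm:incomparability} to $G'$ with $\alpha = 2\epsilon$ (valid since $\epsilon n^2 \geq 2\epsilon\binom{n}{2}$ for $n\geq 2$). This produces some $t \geq 2$ and disjoint subsets $X_1,\dots,X_t$ of $V(G')=V(G)$ such that $X_i$ is complete to $X_j$ in $G'$ for $1\leq i<j\leq t$ and $(n/|X_i|)^{c_0} < t$ for each $i$, where $c_0>0$ depends only on $\epsilon$. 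Since $G'$ is a subgraph of $G$, the completeness automatically transfers to $G$, and any $c \leq c_0$ then yields the required inequality.

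Finally I would set $c = \min\{c_0,\, \log 2/\log(1/\lambda)\}$, which establishes the quasi-Erd\H{o}s-Hajnal property, and Theorem~\ref{thm:main} follows at once from Lemma~\ref{lemma:qEH}. There is no real obstacle at this assembly step---the substantive work has already been packaged into Theorem~\ref{thm:incomparability} together with the cited results of Lee and of Fox--Pach. The only care needed is the routine density conversion between $n^2$ and $\binom{n}{2}$ when feeding Lemma~\ref{lemma:incomp}'s output into Theorem~\ref{thm:incomparability}, and the observation that $G'$ is spanning so that the parameter $n$ in the application of Theorem~\ref{thm:incomparability} matches $|V(G)|$.
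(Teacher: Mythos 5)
Your proposal is correct and mirrors the paper's own proof almost exactly: the same case split on edge density at threshold $\lambda$, Lemma~\ref{lemma:separator} in the sparse case, Lemma~\ref{lemma:incomp} followed by Theorem~\ref{thm:incomparability} in the dense case, and the same final choice $c=\min\{c_0, \log 2/\log(1/\lambda)\}$. The only addition is your explicit note about converting between $\epsilon n^2$ and $\alpha\binom{n}{2}$ when invoking Theorem~\ref{thm:incomparability}, a detail the paper leaves implicit.
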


\begin{proof}
	 Let $\lambda$ be the constant given by Lemma \ref{lemma:separator}, and let $\epsilon$ be the constant given by Lemma \ref{lemma:incomp} (with respect to $\lambda$). Also, let $c_0$ be the constant $c$ given by Theorem \ref{thm:incomparability} with respect to $\epsilon$. We show that the family of string graphs has the quasi-Erd\H{o}s-Hajnal property with the exponent $$c=\min\left\{c_0,\frac{1}{\log_2 (1/\lambda)}\right\}.$$
	 
	 Let $G$ be a string graph with $n$ vertices. If $G$ has at most $\lambda n^2$ edges, then $G$ contains two disjoint subsets $X_1$ and $X_2$ with no edges between them such that $|X_1|=|X_2|\geq \lambda n$. Setting $t=2$, we have $t\geq  (\frac{1}{\lambda})^{c}\geq (\frac{n}{|X_i|})^{c}$ for $i=1,2$. 
	 
	 Now suppose that $G$ has more than $\lambda n^2$ edges, then $G$ contains an incomparability graph $G'$ with at least $\epsilon n^2$ edges.  Then, by Theorem \ref{thm:incomparability},  there exist $t\geq 2$ and $t$ disjoint subsets $X_1,\dots,X_t$ of $G'$ such that $X_i$ is complete to $X_j$ for $1\leq i<j\leq t$, and $(\frac{n}{|X_i|})^{c}<t$ for $i=1,\dots,t$. This finishes the proof.
\end{proof}

\section{Acknowledgements}

We would like to thank Jacob Fox and J\'anos Pach for their valuable remarks on the presentation of this paper. 

The author was also partially supported by MIPT and the grant of Russian Government N 075-15-2019-1926.

\end{document}